\newtheorem{definition}{Definition}
\newtheorem{proposition}{Proposition}
\newtheorem{lemma}{Lemma}
\begin{document}

\articletype{Research Article{\hfill}}

\author[1]{Christian Genest}

\author[1]{Fr\'ed\'eric Ouimet}

\affil[1]{Department of Mathematics and Statistics, McGill University,
805, rue Sherbrooke ouest, Montr\'eal (Qu\'ebec) Canada H3A 0B9}

\title{\huge {A combinatorial proof of the Gaussian product inequality beyond the \texorpdfstring{$\mbox{MTP}_{\bf 2}$}{MTP2} case}}

\runningtitle{Combinatorial proof of the Gaussian product inequality}

%\journalname{Dependence Modeling}

\startpage{01}

%\journalyear{2022}
%\journalvolume{10}
%\journalissue{1}

\renewcommand{\lq}{\textquotedblleft}

%\DOI{10.1515/demo-2022-xxxx}
%\received{April 14, 2022}
%\revised{June 12, 2022}
%\accepted{July 4, 2022}

\maketitle

\vspace{-3mm}
{\small
\noindent
\textbf{Abstract:}
A combinatorial proof of the Gaussian product inequality (GPI) is given under the assumption that each component of a centered Gaussian random vector $\bm{X} = (X_1, \ldots, X_d)$ of arbitrary length can be written as a linear combination, with coefficients of identical sign, of the components of a standard Gaussian random vector. This condition on $\bm{X}$ is shown to be strictly weaker than the assumption that the density of the random vector $(|X_1|, \ldots, |X_d|)$ is multivariate totally positive of order~$2$, abbreviated $\mbox{MTP}_2$, for which the GPI is already known to hold. Under this condition, the paper highlights a new link between the GPI and the monotonicity of a certain ratio of gamma functions.}

\smallskip
\noindent
{\small \textbf{Keywords}: Complete monotonicity, gamma function, Gaussian product inequality, Gaussian random vector, moment inequality, multinomial, multivariate normal, polygamma function.}

\smallskip
\noindent
{\small \textbf{MSC}: Primary 60E15; Secondary 05A20, 33B15, 62E15, 62H10, 62H12}

% 60E15: Probability theory and stochastic processes
% 05A20: Combinatorial inequalities
% 33B15: Gamma, beta and polygamma functions
% 62E15: Exact distribution theory in statistics
% 62H10: Multivariate distribution of statistics
% 62H12: Estimation in multivariate analysis

\section{Introduction\label{sec:1}}

The Gaussian product inequality (GPI) is a long-standing conjecture which states that for any centered Gaussian random vector $\bm{X} = (X_1, \dots, X_d)$ of dimension $d \in \mathbb{N} = \{ 1, 2, \ldots \}$ and every integer $m \in \mathbb{N}$, one~has
\begin{equation}
\label{eq:1}
\mbox{\rm E} \left(\prod_{i=1}^d X_i^{2 m}\right) \geq \prod_{i=1}^d \mbox{\rm E} \big( X_i^{2 m} \big).
\end{equation}
This inequality is known to imply the real polarization problem conjecture in functional analysis \citep{Malicet/etal:2016} and it is related to the so-called $U$-conjecture to the effect that if $P$ and $Q$ are two non-constant polynomials on $\mathbb{R}^d$ such that the random variables $P(\bm{X})$ and $Q(\bm{X})$ are independent, then there exist an orthogonal transformation $L$ on $\mathbb{R}^d$ and an integer $k \in \{ 1, \ldots, d - 1 \}$ such that $P \circ L$ is a function of $(X_1, \ldots, X_k)$ and $Q \circ L$ is a function of $(X_{k+1}, \ldots, X_d)$; see, e.g., \cite{Kagan/etal:1973, Malicet/etal:2016} and references therein.

Inequality~\eqref{eq:1} is well known to be true when $m = 1$; see, e.g., \citet{Frenkel:2008}. \citet{Karlin/Rinott:1981} also showed that it holds when the random vector $|\bm{X}| = (|X_1|, \ldots, |X_d|)$ has a multivariate totally positive density of order~$2$, denoted $\mbox{MTP}_2$. As stated in Remark~1.4 of their paper, the latter condition is verified, among others, in dimension $d = 2$ for all nonsingular Gaussian random pairs.

Interest in the problem has recently gained traction when \citet{Lan/etal:2020} established the inequality in dimension $d = 3$. Hope that the result might be true in general is also fueled by the fact, established by \citet{Wei:2014}, that for any reals $\alpha_1, \ldots, \alpha_d \in (-1/2, 0)$, one has
\begin{equation}
\label{eq:2}
\mbox{\rm E} \left( \prod_{i=1}^d |X_i|^{2 \alpha_i} \right) \geq \prod_{i=1}^d \mbox{\rm E} \big (|X_i|^{2 \alpha_i} \big).
\end{equation}
\citet{Li/Wei:2012} have further conjectured that the latter inequality holds for all reals $\alpha_1, \ldots, \alpha_d \in [0, \infty)$ and any centered Gaussian random vector~$\bm{X}$.

The purpose of this paper is to report a combinatorial proof of inequality \eqref{eq:2} in the special case where the reals $\alpha_1, \ldots, \alpha_d$ are nonnegative integers and when each of the components $X_1, \ldots, X_d$ of the centered Gaussian random vector $\bm{X}$ can be written as a linear combination, with coefficients of identical sign, of the components of a standard Gaussian random vector. A precise statement of this assumption is given as Condition~(III) in Section~\ref{sec:2}, and the proof of the main result, Proposition~\ref{prop:2}, appears in Section~\ref{sec:3}. It is then shown in Section~\ref{sec:4}, see Proposition~\ref{prop:3}, that this condition is strictly weaker than the assumption that the random vector $|\bm{X}|$ is $\mbox{MTP}_2$.

Coincidentally, shortly after the first version of the present paper was posted on arXiv, inequality~\eqref{eq:2} for all nonnegative integers $\alpha_1, \ldots, \alpha_d \in \mathbb{N}_0 = \{0, 1, \ldots\}$ was established under an even weaker assumption, stated as Condition (IV) in Section~\ref{sec:2}. The latter condition states that up to a change of sign, the components of the Gaussian random vector $\bm{X}$ are all nonnegatively correlated; see Lemma~2.3 of \citet{Russell/Sun:2022}. Therefore, the present paper's main contribution resides in the method of proof using a combinatorial argument closely related to the complete monotonicity of multinomial probabilities previously shown by \citet{Ouimet:2018} and \citet{Qi/etal:2020}.

All background material required to understand the contribution and put it in perspective is provided in Section~\ref{sec:2}. The statements and proofs of the paper's results are then presented in Sections~\ref{sec:3} and \ref{sec:4}. The article concludes with a brief discussion in Section~\ref{sec:5}. For completeness, a technical lemma due to \citet{Ouimet:2018}, which is used in the proof of Proposition~\ref{prop:2}, is included in the Appendix.

\section{Background\label{sec:2}}

First recall the definition of multivariate total positivity of order~2 ($\mbox{MTP}_2$) on a set $\mathcal{S} \subseteq \mathbb{R}^d$.

{\begin{definition}
\label{def:MTP2}
A density $f: \mathbb{R}^d \to [0, \infty)$ supported on $\mathcal{S}$ is said to be multivariate totally positive of order $2$, denoted $\mbox{MTP}_2$, if and only if, for all vectors $\bm{x} = (x_1, \ldots, x_d), \bm{y} = (y_1, \ldots, y_d) \in \mathcal{S}$, one has
$$
f(\bm{x} \vee \bm{y}) f(\bm{x} \wedge \bm{y}) \geq f(\bm{x}) f(\bm{y}),
$$
where $\bm{x} \vee \bm{y} = (\max(x_1,y_1), \ldots, \max(x_d, y_d))$ and $\bm{x} \wedge \bm{y} = (\min(x_1,y_1), \ldots, \min(x_d, y_d))$.
\end{definition}}

Densities in this class have many interesting properties, including the following result, which corresponds to Eq.~(1.7) of \citet{Karlin/Rinott:1981}.

{\begin{proposition}
\label{prop:1}
Let $\bm{Y}$ be an $\mbox{MTP}_2$ random vector on $\mathcal{S}$, and let $\varphi_1, \ldots, \varphi_r$ be a collection of nonnegative and (component-wise) non-decreasing functions on $\mathcal{S}$. Then
$$
\mbox{\rm E} \left\{ \prod_{i=1}^r \varphi_i(\bm{Y}) \right\} \geq \prod_{i=1}^r \mbox{\rm E} \left\{ \varphi_i(\bm{Y}) \right\}.
$$
\end{proposition}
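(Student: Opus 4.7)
The plan is a double induction: outer on the number $r$ of functions, inner on the dimension $d$.

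\emph{Reduction to $r = 2$.} It suffices to establish the two-function version, i.e., $\mbox{\rm E}\{\varphi(\bm{Y}) \psi(\bm{Y})\} \geq \mbox{\rm E}\{\varphi(\bm{Y})\} \, \mbox{\rm E}\{\psi(\bm{Y})\}$ for any pair of nonnegative, component-wise non-decreasing functions $\varphi, \psi$ on $\mathcal{S}$. Indeed, the product $\varphi_1 \cdots \varphi_{r-1}$ is itself nonnegative and component-wise non-decreasing, so applying the $r = 2$ inequality to the pair $(\varphi_1 \cdots \varphi_{r-1}, \varphi_r)$ and then invoking the induction hypothesis on $r - 1$ closes the outer loop.

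\emph{Inner induction on $d$ for $r = 2$.} The base case $d = 1$ is the classical Chebyshev covariance inequality: with $Y'$ an independent copy of $Y$, the pointwise bound $\{\varphi(y) - \varphi(y')\}\{\psi(y) - \psi(y')\} \geq 0$ integrates to $\mbox{\rm E}\{\varphi(Y) \psi(Y)\} \geq \mbox{\rm E}\{\varphi(Y)\} \, \mbox{\rm E}\{\psi(Y)\}$, and the MTP$_2$ hypothesis is vacuous in one dimension. For the inductive step, split $\bm{Y} = (\bm{Y}', Y_d)$ with $\bm{Y}' = (Y_1, \ldots, Y_{d-1})$, and introduce the conditional expectations $\Phi(\bm{y}') = \mbox{\rm E}\{\varphi(\bm{Y}) \mid \bm{Y}' = \bm{y}'\}$ and $\Psi(\bm{y}') = \mbox{\rm E}\{\psi(\bm{Y}) \mid \bm{Y}' = \bm{y}'\}$. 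I would establish two auxiliary facts: (a) the marginal density of $\bm{Y}'$ is MTP$_2$ on its projected support, and (b) $\Phi$ and $\Psi$ are nonnegative and component-wise non-decreasing. Granted these, the one-dimensional Chebyshev inequality applied conditionally on $\bm{Y}'$ yields $\mbox{\rm E}\{\varphi(\bm{Y}) \psi(\bm{Y}) \mid \bm{Y}'\} \geq \Phi(\bm{Y}') \, \Psi(\bm{Y}')$; taking outer expectations and invoking the induction hypothesis in dimension $d - 1$ applied to the MTP$_2$ vector $\bm{Y}'$ with the pair $(\Phi, \Psi)$ produces the desired inequality.

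Sub-claim (b) follows because log-supermodularity of the joint density implies that the conditional law of $Y_d$ given $\bm{Y}' = \bm{y}'$ is stochastically non-decreasing in each coordinate of $\bm{y}'$; integrating a non-decreasing $\varphi$ against this family of laws, while also varying $\bm{y}'$ inside $\varphi$ itself, produces a non-decreasing function because both effects push in the same direction. \emph{The main obstacle} is sub-claim (a): preservation of MTP$_2$ under marginalization is delicate, since log-supermodularity does not pass to integrated marginals in an elementary way. A separate FKG-type integration argument on the joint density is required, and the two inductions must be ordered carefully to avoid a circular dependency between this marginalization lemma and the two-function inequality it is meant to enable.
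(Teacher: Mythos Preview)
The paper does not prove Proposition~1; it is quoted as Eq.~(1.7) of \citet{Karlin/Rinott:1981} and used only as background. There is thus no proof in the paper to compare your proposal against.

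Your outline is the classical route to the FKG-type correlation inequality and is essentially correct. The obstacle you single out is real: closure of the MTP$_2$ class under marginalization is itself typically proved from (or simultaneously with) the correlation inequality, so invoking it inside your induction risks circularity. A standard way to sidestep this is to reverse the conditioning: condition on the single coordinate $Y_d$ rather than on $\bm{Y}'=(Y_1,\ldots,Y_{d-1})$. The conditional density of $\bm{Y}'$ given $Y_d=y_d$ is proportional to $f(\cdot,y_d)$ and is trivially MTP$_2$ in the remaining $d-1$ variables (fixing a coordinate preserves log-supermodularity, no integration required), so the $(d{-}1)$-dimensional induction hypothesis gives $\mbox{\rm E}\{\varphi\psi\mid Y_d\}\ge \mbox{\rm E}\{\varphi\mid Y_d\}\,\mbox{\rm E}\{\psi\mid Y_d\}$ directly. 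Monotonicity of $y_d\mapsto \mbox{\rm E}\{\varphi\mid Y_d=y_d\}$ then follows from a \emph{second} application of the same $(d{-}1)$-dimensional hypothesis: the likelihood ratio between the conditional laws at $y_d<y_d'$ is coordinatewise non-decreasing by pairwise TP$_2$, and its positive correlation with $\varphi(\cdot,y_d')$ under the (MTP$_2$) conditional law is exactly what the induction hypothesis delivers. The outermost step is then one-dimensional Chebyshev in $Y_d$. This ordering uses only conditional, never marginal, MTP$_2$ densities and breaks the circularity you were worried about.
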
}

In particular, let $\bm{X} = (X_1, \ldots, X_d)$ be a $d$-variate Gaussian random vector with zero mean and nonsingular covariance matrix $\mbox{\rm var}(\bm{X})$. Suppose that the following condition holds.

\bigskip
{\begin{description}
\item [(I)]
The random vector $|\bm{X}| = (|X_1|, \ldots, |X_d|)$ belongs to the $\mbox{MTP}_2$ class on $[0, \infty)^d$.
\end{description}}

Under Condition~(I), the validity of the GPI conjecture \eqref{eq:2} for all reals $\alpha_1, \ldots, \alpha_d \in [0, \infty)$ follows from Proposition~\ref{prop:1} with $r = d$ and maps $\varphi_1, \ldots, \varphi_d$ defined, for every vector $\bm{y} = (y_1, \ldots, y_d) \in [0, \infty)^d$ and integer $i \in \{ 1, \ldots, d \}$, by
$$
\varphi_i (\bm{y}) = y_i^{2 \alpha_i}.
$$

When $\bm{X} = (X_1, \ldots,X_d)$ is a centered Gaussian random vector with covariance matrix $\mbox{\rm var} ({\bm X})$, Theorem~3.1 of~\citet{Karlin/Rinott:1981} finds an equivalence between Condition~(I) and the requirement that the off-diagonal elements of the inverse of $\mbox{\rm var} ({\bm X})$ are all nonpositive up to a change of sign for some of the components of $\bm{X}$. The latter condition can be stated more precisely as follows using the notion of signature matrix, which refers to a diagonal matrix whose diagonal elements are $\pm 1$.

{\begin{figure}[t!]
\centering
\includegraphics[width=150mm]{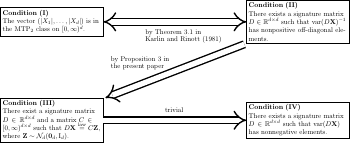}
\caption{Implications between Conditions~(I)--(IV) for a nonsingular centered Gaussian random vector $\bm{X}$, with references.}
\label{fig:1}
\end{figure}}

\bigskip
{\begin{description}
\item [(II)]
There exists a $d \times d$ signature matrix $D$ such that the covariance matrix $\mbox{\rm var}(D \bm{X})^{-1}$ only has nonpositive off-diagonal elements.
\end{description}}

Two other conditions of interest on the structure of the random vector $\bm{X}$ are as follows.
\bigskip
{\begin{description}
\item [(III)]
 There exist a $d \times d$ signature matrix $D$ and a $d \times d$ matrix $C$ with entries in $[0, \infty)$ such that the random vector $D \bm{X}$ has the same distribution as the random vector $C \bm{Z}$, where $\bm{Z} \sim \mathcal{N}_d(\bm{0}_d, \bm{I}_d)$ is a $d \times 1$ Gaussian random vector with zero mean vector $\bm{0}_d$ and identity covariance matrix $\bm{I}_d$.

\medskip
 \item [(IV)]
There exists a $d \times d$  signature matrix $D$ such that the covariance matrix $\mbox{\rm var}(D \bm{X})$ has only nonnegative elements.
\end{description}}

Recently, \citet{Russell/Sun:2022} used Condition~(IV) to show that, for all integers $d \in \mathbb{N}$, $n_1, \ldots, n_d \in \mathbb{N}_0$ and $k \in \{1, \ldots, d - 1\}$, and up to a change of sign for some of the components of~$\bm{X}$, one has
\begin{equation}
\label{eq:3}
\mbox{\rm E} \left( \prod_{i=1}^d X_i^{2 n_i} \right) \geq \mbox{\rm E }\left( \prod_{i=1}^k X_i^{2 n_i} \right) \, \mbox{\rm E} \left( \prod_{i=k+1}^d X_i^{2 n_i} \right).
\end{equation}
This result was further extended by \citet{Edelmann/Richards/Royen:2022} to the case where the random vector $(X_1^2, \ldots, X_d^2)$ has a multivariate gamma distribution in the sense of \citet{Krishnamoorthy/Parthasarathy:1951}. See also \cite{Bolviken:1982} for a use of Condition~(IV) in the context of the Gaussian correlation inequality (GCI) conjecture.

In the following section, it will be shown how Condition~(III) can be exploited to give a combinatorial proof of a weak form of inequality~\eqref{eq:3}. It will then be seen in Section~\ref{sec:4} that Condition~(II) implies Condition~(III), thereby proving the implications illustrated in Fig.~\ref{fig:1} between Conditions~(I)--(IV). That the implications Condition~(II) $\Rightarrow$ Condition~(III) and Condition~(III) $\Rightarrow$ Condition~(IV) are strict can be checked using, respectively, the covariance matrices
$$
\mbox{\rm var} ({\bm X}) = \begin{pmatrix}3/2 & 9/8 & 9/8 \\ 9/8 & 21/16 & 3/4 \\ 9/8 & 3/4 & 21/16\end{pmatrix} = \begin{pmatrix}1 & 1/2 & 1/2 \\ 1/2 & 1 & 1/4 \\ 1/2 & 1/4 & 1\end{pmatrix} \begin{pmatrix}1 & 1/2 & 1/2 \\ 1/2 & 1 & 1/4 \\ 1/2 & 1/4 & 1\end{pmatrix}
$$
and
$$
\mbox{\rm var} ({\bm X}) =
\begin{pmatrix} 1 & 0 & 0 & 1/2 & 1/2 \\ 0 & 1 & 3/4 & 0 & 1/2 \\ 0 & 3/4 & 1 & 1/2 & 0 \\ 1/2 & 0 & 1/2 & 1 & 0 \\ 1/2 & 1/2 & 0 & 0 & 1
\end{pmatrix} + \varepsilon \, \begin{pmatrix} 1 & 0 & 0 & 0 & 0 \\ 0 & 1 & 0 & 0 & 0 \\ 0 & 0 & 1 & 0 & 0 \\ 0 & 0 & 0 & 1 & 0 \\ 0 & 0 & 0 & 0 & 1
\end{pmatrix},
$$
for some appropriate $\varepsilon \in (0, \infty)$.

In the first example, the matrix $\mbox{\rm var} ({\bm X})$ is completely positive (meaning that it can be written as $C C^{\top}$ for some matrix $C$ with nonnegative entries) and positive definite by construction. Furthermore, the matrix $D \, \mbox{\rm var} ({\bm X})^{-1} D$ has at least one positive off-diagonal element for any of the eight possible choices of $3 \times 3$ signature matrix $D$. Another way to see this is to observe that if $A = \mbox{\rm var}(\bm X)^{-1}$, then the cyclic product $a_{12} a_{23} a_{31}$, which is invariant to $A\mapsto D A D$, is strictly positive in the above example, so that the off-diagonal elements of $\mbox{\rm var}(\bm X)^{-1}$ cannot all be nonpositive. This shows that (III) $\not\Rightarrow$~(II). This example was adapted from ideas communicated to the authors by Thomas Royen.

For the second example, when $\varepsilon = 0$, it is mentioned by~\citet{Maxfield/Minc:1962}, using a result from \citet{Hall:1958}, that the matrix is positive semidefinite and has only nonnegative elements but that it is not completely positive. Using the fact that the set of $5 \times 5$ completely positive matrices is closed, there exists $\varepsilon \in (0, \infty)$ small enough that the matrix $\mbox{\rm var} ({\bm X})$ is positive definite and has only nonnegative elements but is not completely positive. More generally, given that the elements of $\mbox{\rm var} ({\bm X})$ are all nonnegative, the matrix $D \, \mbox{\rm var} ({\bm X}) D$ is not completely positive for any of the $32$ possible choices of $5 \times 5$ signature matrix $D$, which shows that (IV)~$\not\Rightarrow$~(III). This idea was adapted from comments by~\citet{Stein:2011}.

\section{A combinatorial proof of the GPI conjecture\label{sec:3}}

The following result, which is this paper's main result, shows that the extended GPI conjecture of \citet{Li/Wei:2012} given in~\eqref{eq:2} holds true under Condition~(III) when the reals $\alpha_1, \ldots, \alpha_d$ are nonnegative integers. This result also follows from inequality~\eqref{eq:3}, due to \citet{Russell/Sun:2022}, but the argument below is completely different from the latter authors' derivation based on Condition~(IV).

{\begin{proposition}
\label{prop:2}
Let $\bm{X} = (X_1, \ldots, X_d)$ be a $d$-variate centered Gaussian random vector. Assume that there exist a $d \times d$ signature matrix $D$ and a $d \times d$ matrix $C$ with entries in $[0, \infty)$ such that the random vector $D \bm{X}$ has the same distribution as the random vector $C \bm{Z}$, where $\bm{Z}\sim \mathcal{N}_d(\bm{0}_d, \bm{I}_d)$ is a $d$-dimensional standard Gaussian random vector. Then, for all integers $n_1, \dots, n_d\in \mathbb{N}_0$,
$$
\mbox{\rm E} \left( \prod_{i=1}^d X_i^{2 n_i} \right) \geq \prod_{i=1}^d \mbox{\rm E} \left( X_i^{2 n_i} \right).
$$
\end{proposition}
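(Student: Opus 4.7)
The plan is to reduce the Gaussian moment inequality, via two parallel multinomial expansions, to a purely combinatorial inequality about double factorials that can be read off column by column. First, I would eliminate the signature matrix: since each $X_i$ appears only to an even power and $(D \bm{X})_i = \pm X_i$, one has $(D \bm{X})_i^{2 n_i} = X_i^{2 n_i}$, and the assumption that $D \bm{X}$ has the same distribution as $C \bm{Z}$ reduces the problem to the case $X_i = \sum_{j=1}^{d} c_{ij} Z_j$ with all $c_{ij} \geq 0$ and $Z_1, \ldots, Z_d$ independent $\mathcal{N}(0, 1)$ variables.

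Next, I would expand each $X_i^{2 n_i} = (\sum_{j} c_{ij} Z_j)^{2 n_i}$ by the multinomial theorem, take expectations, and use the independence of the $Z_j$'s together with the moment identity $\mathrm{E}(Z^{2 \ell}) = (2 \ell - 1)!!$. Indexing both sides of the desired inequality by nonnegative integer matrices $\{k_{ij}\}_{i,j}$ with row sums $\sum_j k_{ij} = 2 n_i$, and setting $K_j = \sum_i k_{ij}$, the sums representing $\mathrm{E}(\prod_i X_i^{2 n_i})$ and $\prod_i \mathrm{E}(X_i^{2 n_i})$ share the same multinomial coefficients and the same nonnegative weight $\prod_{i,j} c_{ij}^{k_{ij}}$, and differ only in a single factor: $\prod_j (K_j - 1)!!$, restricted to matrices with all $K_j$ even on the left, versus $\prod_{i,j} (k_{ij} - 1)!!$, restricted to matrices with all $k_{ij}$ even on the right. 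Since the latter support is contained in the former, and the weights $\prod_{i,j} c_{ij}^{k_{ij}}$ are nonnegative---the only place where Condition~(III) enters---it then suffices to establish the pointwise bound
$$
\prod_{j=1}^{d} (K_j - 1)!! \;\geq\; \prod_{i=1}^{d} \prod_{j=1}^{d} (k_{ij} - 1)!!
$$
for every matrix $\{k_{ij}\}$ whose entries are nonnegative even integers.

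Writing $k_{ij} = 2 a_{ij}$ and $A_j = \sum_i a_{ij}$, this inequality factors over columns, so the whole proposition reduces to the single-column claim that, for any nonnegative integers $a_1, \ldots, a_d$ with sum $A$, one has $(2 A - 1)!! \geq \prod_i (2 a_i - 1)!!$. This admits a transparent combinatorial reading: $(2A - 1)!!$ counts perfect matchings on $2 A$ labeled points, while the right-hand side counts those matchings that only pair points within prescribed blocks of sizes $2 a_1, \ldots, 2 a_d$. Equivalently, via the duplication identity $(2 k - 1)!! = 2^k \Gamma(k + 1/2) / \sqrt{\pi}$, the bound recasts as a monotonicity statement about a ratio of gamma functions, matching the abstract's remark about a link between the GPI and gamma function monotonicity, and this is exactly what the technical lemma of \citet{Ouimet:2018} recalled in the appendix delivers. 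The main technical obstacle, in my view, is aligning the two multinomial expansions on a common index set so that a single nonnegative pointwise difference remains after subtraction; once this bookkeeping is done, the reduction to the single-column matching inequality and its verification via the appendix lemma are essentially mechanical.
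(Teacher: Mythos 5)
Your proposal is correct, and its skeleton---reduce via the signature matrix to $X_i=\sum_j c_{ij}Z_j$ with $c_{ij}\ge 0$, expand both sides by the multinomial theorem over matrices $\{k_{ij}\}$ with row sums $2n_i$, observe that the two sides carry identical multinomial coefficients and identical nonnegative weights $\prod_{i,j}c_{ij}^{k_{ij}}$, discard the extra (nonnegative) terms on the left, and compare the remaining terms pointwise column by column---is exactly the paper's. Where you genuinely diverge is in the final, crucial step. Your termwise inequality $\prod_j(K_j-1)!!\ge\prod_{i,j}(k_{ij}-1)!!$ is algebraically equivalent to the paper's inequality~\eqref{eq:7} (substitute $(2m)!=(2m-1)!!\,2^m m!$), and you settle its single-column form $(2A-1)!!\ge\prod_i(2a_i-1)!!$ by counting perfect matchings: block-respecting matchings of $2A$ points form a subset of all perfect matchings. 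This is a complete, one-line combinatorial proof, strictly more elementary than the paper's route, which instead writes the column inequality as $g_j(2)\ge g_j(1)$ for the gamma-ratio $g_j(a)=\Gamma(aL_j+1)/\prod_i\Gamma(a\ell_{ij}+1)$ and proves that $a\mapsto\ln g_j(a)$ is non-decreasing (indeed convex, with completely monotonic second derivative) via the trigamma integral representation and Lemma~\ref{lemma:A1}. What the paper's heavier machinery buys is precisely the advertised link to complete monotonicity and the monotonicity of the gamma ratio over a continuous parameter, which is stronger than the single comparison at $a=1$ versus $a=2$ that the inequality requires. One small imprecision in your write-up: Lemma~\ref{lemma:A1} does not directly ``deliver'' the double-factorial bound; it is an ingredient inside the proof that $\frac{d^2}{da^2}\ln g_j(a)\ge 0$. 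But since your matching argument is self-contained, nothing in your proof depends on that remark.
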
}

{\begin{proof}
In terms of $\bm{Z}$, the claimed inequality is equivalent to
\begin{equation}
\label{eq:4}
\mbox{\rm E}\left\{\prod_{i=1}^d \left(\sum_{j=1}^d c_{ij} Z_j\right)^{2 n_i}\right\} \geq \prod_{i=1}^d \mbox{\rm E}\left\{\left(\sum_{j=1}^d c_{ij} Z_j\right)^{2 n_i}\right\}.
\end{equation}

For each integer $j \in \{ 1, \ldots, d \}$, set $K_j = k_{1j} + \cdots + k_{dj}$ and $L_j = \ell_{1j}+ \cdots + \ell_{dj}$, where $k_{ij}$ and $\ell_{ij}$ are nonnegative integer-valued indices to be used in expressions \eqref{eq:5} and \eqref{eq:6} below.

By the multinomial formula, the left-hand side of inequality \eqref{eq:4} can be expanded as follows:
$$
\mbox{\rm E}\left\{\prod_{i=1}^d \sum_{\substack{\bm{k}_i\in \mathbb{N}_0^d : \\ k_{i1} + \dots + k_{id} = 2 n_i}} \binom{2 n_i}{k_{i1}, \dots, k_{id}} \prod_{j=1}^d c_{ij}^{k_{ij}} Z_j^{k_{ij}}\right\}.
$$
Calling on the linearity of expectations and the mutual independence of the components of the random vector $\bm{Z}$, one can then rewrite this expression as
\begin{multline*}
\mbox{\rm E}\left\{\sum_{\substack{\bm{k}_1\in \mathbb{N}_0^d : \\ k_{11} + \dots + k_{1d} = 2 n_1}} \dots \sum_{\substack{\bm{k}_d\in \mathbb{N}_0^d : \\ k_{d1} + \dots + k_{dd} = 2 n_d}} \prod_{i=1}^d \binom{2 n_i}{k_{i1}, \dots, k_{id}} \prod_{j=1}^d c_{ij}^{k_{ij}} Z_j^{k_{ij}}\right\} \\
= \sum_{\substack{\bm{k}_1\in \mathbb{N}_0^d : \\ k_{11} + \dots + k_{1d} = 2 n_1}} \dots \sum_{\substack{\bm{k}_d\in \mathbb{N}_0^d : \\ k_{d1} + \dots + k_{dd} = 2 n_d}} \left\{\prod_{j=1}^d \mbox{\rm E} \big (Z_j^{K_j} \big) \right\}  \prod_{i=1}^d \binom{2 n_i}{k_{i1}, \dots, k_{id}} \prod_{j=1}^d c_{ij}^{k_{ij}} .
\end{multline*}

\bigskip
Given that the coefficients $c_{ij}$ are all nonnegative by assumption, and exploiting the fact that, for every integer $j \in \{ 1, \ldots, d \}$ and $m\in \mathbb{N}_0$,
$$
\mbox{\rm E} \big( Z_j^{2m} \big) = \frac{(2m)!}{2^m m!} ,
$$
one can bound the left-hand side of inequality \eqref{eq:4} from below~by
\begin{multline}
\label{eq:5}
\sum_{\substack{\bm{\ell}_1\in \mathbb{N}_0^d : \\ 2 \ell_{11} + \dots + 2 \ell_{1d} = 2 n_1}} \dots \sum_{\substack{\bm{\ell}_d\in \mathbb{N}_0^d : \\ 2 \ell_{d1} + \dots + 2 \ell_{dd} = 2 n_d}} \left\{\prod_{j=1}^d \mbox{\rm E}\left(Z_j^{2 L_j}\right)\right\} \prod_{i=1}^d \binom{2 n_i}{2 \ell_{i1}, \dots, 2 \ell_{id}} \prod_{j=1}^d c_{ij}^{2 \ell_{ij}} \\
= \sum_{\substack{\bm{\ell}_1\in \mathbb{N}_0^d : \\ \ell_{11} + \dots + \ell_{1d} = n_1}} \dots \sum_{\substack{\bm{\ell}_d\in \mathbb{N}_0^d : \\ \ell_{d1} + \dots + \ell_{dd} = n_d}} \left\{\prod_{j=1}^d \frac{(2 L_j)!}{2^{L_j} L_j!}\right\} \prod_{i=1}^d \binom{2 n_i}{2 \ell_{i1}, \dots, 2 \ell_{id}} \prod_{j=1}^d c_{ij}^{2 \ell_{ij}}.
\end{multline}

\medskip
The right-hand side of \eqref{eq:4} can be expanded in a similar way. Upon using the fact that $\mbox{\rm E} (Y^{2m}) = (2m)! \sigma^{2m} / (2^m m!)$ for every integer $m \in \mathbb{N}_0$ when $Y \sim \mathcal{N}(0,\sigma^2)$, one finds

\begin{align}
\label{eq:6}
\prod_{i=1}^d \mbox{\rm E}\left\{\left(\sum_{j=1}^d c_{ij} Z_j\right)^{2 n_i}\right\} & = \prod_{i=1}^d \frac{(2 n_i)!}{2^{n_i} n_i!} \left(\sum_{j=1}^d c_{ij}^2\right)^{n_i}  = \prod_{i=1}^d \frac{(2 n_i)!}{2^{n_i} n_i!} \sum_{\substack{\bm{\ell}_i\in \mathbb{N}_0^d : \\ \ell_{i1} + \dots + \ell_{id} = n_i}} \binom{n_i}{\ell_{i1}, \dots, \ell_{id}} \prod_{j=1}^d c_{ij}^{2 \ell_{ij}} \nonumber \\
& = \sum_{\substack{\bm{\ell}_1\in \mathbb{N}_0^d : \\ \ell_{11} + \dots + \ell_{1d} = n_1}} \dots \sum_{\substack{\bm{\ell}_d\in \mathbb{N}_0^d : \\ \ell_{d1} + \dots + \ell_{dd} = n_d}} \prod_{i=1}^d \frac{(2 n_i)!}{2^{n_i} n_i!} \binom{n_i}{\ell_{i1}, \dots, \ell_{id}} \prod_{j=1}^d c_{ij}^{2 \ell_{ij}}.
\end{align}

Next, compare the coefficients of the corresponding powers $c_{ij}^{2 \ell_{ij}}$ in expressions \eqref{eq:5} and \eqref{eq:6}. In order to prove inequality \eqref{eq:4}, it suffices to show that, for all integer-valued vectors $\bm{\ell}_1, \dots, \bm{\ell}_d \in \mathbb{N}_0^d$ satisfying $\ell_{i1} + \dots + \ell_{id} = n_i$ for every integer $i \in \{1, \ldots, d \}$, one has
$$
\left\{\prod_{j=1}^d \frac{(2 L_j)!}{2^{L_j} L_j!}\right\} \prod_{i=1}^d \binom{2 n_i}{2 \ell_{i1}, \dots, 2 \ell_{id}} \geq \prod_{i=1}^d \frac{(2 n_i)!}{2^{n_i} n_i!} \binom{n_i}{\ell_{i1}, \dots, \ell_{id}}.
$$

Taking into account the fact that $2^{L_1 + \cdots + L_d} = 2^{n_1 + \cdots + n_d}$, and after cancelling some factorials, one finds that the above inequality reduces to
\begin{equation}
\label{eq:7}
\prod_{j=1}^d \frac{(2 L_j)!}{\prod_{i=1}^d (2 \ell_{ij})!} \geq  \prod_{j=1}^d \frac{L_j!}{\prod_{i=1}^d \ell_{ij}!}.
\end{equation}

Therefore, the proof is complete if one can establish inequality~\eqref{eq:7}. To this end, one can assume without loss of generality that the integers $L_1, \ldots, L_d$ are all non-zero; otherwise, inequality~\eqref{eq:7} reduces to a lower-dimensional case. For any given integers $L_1, \ldots, L_d \in \mathbb{N}$ and every integer $j \in \{1, \ldots, d \}$, define the function
$$
a \mapsto g_j (a) = \frac{\Gamma(a L_j + 1)}{\prod_{i=1}^d \Gamma(a \ell_{ij} + 1)},
$$
on the interval $(-1/L_j,\infty)$, where $\Gamma$ denotes Euler's gamma function.

To prove inequality \eqref{eq:7}, it thus suffices to show that, for every integer $j \in \{1, \ldots, d\}$, the map $a \mapsto \ln \{ g_j(a)\}$ is non-decreasing on the interval $[0, \infty)$. Direct computations yield, for every real $a \in [0,\infty)$,
\begin{align*}
\frac{d}{d a} \ln \{ g_j(a) \}
&= L_j \psi(a L_j + 1) - \sum_{i=1}^d \ell_{ij} \psi(a \ell_{ij} + 1), \\
\frac{d^2}{da^2} \ln \{ g_j(a) \}
&= L_j^2 \psi'(a L_j + 1) - \sum_{i=1}^d \ell_{ij}^2 \psi'(a \ell_{ij} + 1),
\end{align*}
where $\psi = (\ln \Gamma)'$ denotes the digamma function. Now call on the integral representation \cite[p.~260]{Abramovitz/Stegun:1964}
$$
\psi'(z) = \int_0^{\infty} \frac{t e^{-(z - 1)t}}{e^t - 1} \, d t,
$$
valid for every real $z \in (0, \infty)$, to write
\begin{align}
\label{eq:8}
\frac{d^2}{da^2} \ln \{ g_j (a) \}
& = \int_0^{\infty} \frac{(L_j t) e^{-a (L_j t)}}{e^t - 1} \, L_j d t - \sum_{i=1}^d \int_0^{\infty} \frac{(\ell_{ij} t) e^{-a (\ell_{ij} t)}}{e^t - 1} \, \ell_{ij} d t \nonumber \\
& = \int_0^{\infty} s e^{-a s} \left\{\frac{1}{e^{s / L_j} - 1} - \sum_{i=1}^d \frac{1}{(e^{s / L_j})^{L_j / \ell_{ij}} - 1}\right\} d s.
\end{align}

Given that $(\ell_{1j} + \cdots + \ell_{dj})/ L_j = 1$ by construction, the quantity within braces in Eq.~\eqref{eq:8} is always nonnegative by Lemma~1.4 of~\citet{Ouimet:2018}; this can be checked upon setting $y = e^{s / L_j}$ and $u_i = \ell_{ij} / L_j$ for every integer $i \in \{1, \ldots, d\}$ in that paper's notation. Alternatively, see p.~516 of~\citet{Qi/etal:2020}. Therefore,
\begin{equation}
\label{eq:9}
\forall_{a \in [0, \infty)} \quad \frac{d^2}{da^2} \ln \{ g_j(a) \}  \geq 0.
\end{equation}
In fact, the map $a\mapsto d^2 \ln \{ g_j(a) \} / d a^2$ is even completely monotonic. Moreover, given that
$$
\left.\frac{d}{d a} \ln  \{ g_j(a) \} \right|_{a = 0} = L_j \psi(1) - \sum_{i=1}^d \ell_{ij} \psi(1) = 0 \times \psi(1) = 0,
$$
one can deduce from inequality \eqref{eq:9} that
$$
\forall_{a\in [0,\infty)} \quad \frac{d}{d a} \ln  \{ g_j(a) \} \geq 0.
$$
Hence the map $a \mapsto \ln \{ g_j(a) \} $ is non-decreasing on $[0,\infty)$. This concludes the argument.
\end{proof}}

\section{Condition~(II) implies Condition~(III) \label{sec:4}}

This paper's second result, stated below, shows that Condition~(II) implies Condition~(III). In view of Fig.~\ref{fig:1}, one may then conclude that Condition~(I) also implies Conditions~(III) and (IV), and hence also Condition~(II) implies Condition~(IV). The implication Condition~(II) $\Rightarrow$ Condition~(IV) was already established in Theorem~2~(i) of \citet{Karlin/Rinott:1983}, and its strictness was mentioned on top of p.~427 of the same paper.

{\begin{proposition}
\label{prop:3}
Let $\Sigma$ be a symmetric positive definite matrix with Cholesky decomposition $\Sigma = C C^{\top}$. If the off-diagonal entries of $\Sigma^{-1}$ are all nonpositive, then the elements of $C$ are all nonnegative.
\end{proposition}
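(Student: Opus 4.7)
The plan is to proceed by induction on the dimension $d$, peeling off one row and column at a time and exploiting the well-known correspondence between matrices whose inverse has nonpositive off-diagonal entries (so-called Stieltjes matrices, i.e., symmetric M-matrices) and matrices with nonnegative entries. The one nontrivial classical fact I will invoke is that every symmetric positive definite matrix $A$ with nonpositive off-diagonal entries has an entrywise nonnegative inverse; this is standard in the M-matrix literature and would be cited rather than reproved. Applied to $A = \Sigma^{-1}$, it gives at once that $\Sigma \geq 0$ componentwise.

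For the induction, the base case $d = 1$ is trivial since then $C = (\sqrt{\Sigma})$. For the inductive step, I would partition
$$
\Sigma = \begin{pmatrix} \sigma_{11} & \sigma_{12}^{\top} \\ \sigma_{12} & \Sigma_{22} \end{pmatrix}, \qquad C = \begin{pmatrix} \sqrt{\sigma_{11}} & \bm{0}^{\top} \\ \sigma_{12}/\sqrt{\sigma_{11}} & C_{22} \end{pmatrix},
$$
which is the usual Cholesky recursion, where $C_{22}$ is the (lower triangular) Cholesky factor of the Schur complement $S = \Sigma_{22} - \sigma_{12}\sigma_{12}^{\top}/\sigma_{11}$. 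The first column of $C$ is nonnegative because $\sigma_{11} > 0$ (positive definiteness) and $\sigma_{12} \geq 0$ componentwise (by the Stieltjes fact above), so only the nonnegativity of $C_{22}$ remains.

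The crux is therefore to verify that $S$ again satisfies the hypothesis of the proposition, so that the inductive hypothesis applies to it. Here I would use the standard block-inversion identity
$$
\Sigma^{-1} = \begin{pmatrix} \ast & \ast \\ \ast & S^{-1} \end{pmatrix},
$$
which shows that $S^{-1}$ is exactly the trailing $(d-1) \times (d-1)$ principal submatrix of $\Sigma^{-1}$. Being a principal submatrix of a symmetric positive definite matrix, $S^{-1}$ is itself symmetric positive definite; being a principal submatrix of a matrix with nonpositive off-diagonal entries, $S^{-1}$ inherits that property. Hence $S$ meets the hypothesis of Proposition~\ref{prop:3} in dimension $d-1$, and the inductive hypothesis yields $C_{22} \geq 0$, completing the proof.

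I do not expect any real obstacle: the only step that is not pure bookkeeping is the entrywise nonnegativity of $\Sigma = A^{-1}$ for a Stieltjes matrix $A$, and this will be brought in as a cited classical result rather than proved in place. Everything else — the Cholesky recursion, the Schur-complement identity, and the stability of the hypotheses under taking principal submatrices — is routine linear algebra.
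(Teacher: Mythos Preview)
Your argument is correct and follows essentially the same inductive strategy as the paper: peel off the first row/column, observe that the trailing $(d-1)\times(d-1)$ principal submatrix of $\Sigma^{-1}$ is exactly the inverse of the Schur complement $S$, and apply the induction hypothesis to $S$.

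The one genuine difference is how the nonnegativity of the first column of $C$ is obtained. You invoke the classical Stieltjes/M-matrix fact that $\Sigma = (\Sigma^{-1})^{-1} \geq 0$ entrywise, hence $\sigma_{12} \geq 0$. The paper instead partitions $\Sigma^{-1} = \begin{pmatrix} a & \bm{v}^{\top} \\ \bm{v} & B \end{pmatrix}$ and computes the Cholesky factor of $\Sigma$ explicitly, obtaining first column $-LL^{\top}\bm{v}/\sqrt{w}$ with $L$ the Cholesky factor of $B^{-1}$; nonnegativity then follows directly from $L \geq 0$ (induction) and $-\bm{v} \geq 0$ (hypothesis), without any external citation. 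Since $B^{-1}$ is precisely your Schur complement $S$ and the paper's $L$ is your $C_{22}$, the two arguments are formally identical up to this point; the paper's version simply buys self-containment at the cost of one extra line of block-matrix algebra, while yours trades that computation for a standard reference.
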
}

{\begin{proof}
The proof is by induction on the dimension $d$ of $\Sigma$. The claim trivially holds when $d = 1$. Assume that it is verified for some integer $n \in \mathbb{N}$, and fix $d = n + 1$. Given the assumptions on $\Sigma$, one can write
$$
\Sigma^{-1} =
\begin{pmatrix}
a &\bm{v}^{\top} \\[1mm] \bm{v} &B
\end{pmatrix}
$$
in terms of a real $a \in (0, \infty)$, an $n \times 1$ vector $\bm{v}$ with nonpositive  components, and an $n \times n$ matrix $B$ with nonpositive off-diagonal entries.

Given that $\Sigma$ is symmetric positive definite by assumption, so is $\Sigma^{-1}$, and hence so are $B$ and $B^{-1}$. Moreover, the off-diagonal entries of $B = (B^{-1})^{-1}$ are nonpositive and hence by induction, the factor $L$ in the Cholesky decomposition $B^{-1} = L L^{\top}$ has nonnegative entries. Letting $w = a - \bm{v}^{\top} L L^{\top} \bm{v}$ denote the Schur complement, which is strictly positive, one has
$$
\begin{aligned}
\Sigma^{-1} =
\begin{pmatrix}
a & \bm{v}^{\top} \\[1mm] \bm{v} &(L L^{\top})^{-1}
\end{pmatrix}
=
\begin{pmatrix}
\sqrt{w} & \bm{v}^{\top} L \\ \bm{0}_n &(L^{\top})^{-1}
\end{pmatrix}
\begin{pmatrix}
\sqrt{w} &\bm{0}_n^{\top} \\ L^{\top} \bm{v} &L^{-1}
\end{pmatrix},
\end{aligned}
$$
where $\bm{0}_n$ is an $n \times 1$ vector of zeros. Accordingly,
$$
\Sigma =
\begin{pmatrix}
\sqrt{w} & \bm{0}_n^{\top} \\ L^{\top} \bm{v} &L^{-1}
\end{pmatrix}^{-1}
\begin{pmatrix}
\sqrt{w} & \bm{v}^{\top} L \\ \bm{0}_n &(L^{\top})^{-1}
\end{pmatrix}^{-1}
=
\begin{pmatrix}
1/\sqrt{w} & \bm{0}_n^{\top} \\  - L L^{\top} \bm{v}/\sqrt{w} &L
\end{pmatrix}
\begin{pmatrix}
1/\sqrt{w} & \bm{0}_n^{\top} \\ - L L^{\top} \bm{v}/\sqrt{w} &L
\end{pmatrix}^{\top} = C C^{\top}.
$$
Recall that $w$ is strictly positive and all the entries of $L$ and $-\bm{v}$ are nonnegative. Hence, all the elements of $C$ are nonnegative, and the argument is complete.
\end{proof}}

\section{Discussion\label{sec:5}}

This paper shows that the Gaussian product inequality~\eqref{eq:2} holds under Condition~(III) when the reals $\alpha_1, \ldots, \alpha_d$ are nonnegative integers. This assumption is further seen to be strictly weaker than Condition~(II). It thus follows from the implications in Fig.~\ref{fig:1} that when the reals $\alpha_1, \ldots, \alpha_d$ are nonnegative integers, inequality~\eqref{eq:2} holds more generally than under the $\mbox{MTP}_2$ condition of~\citet{Karlin/Rinott:1981}.

Shortly after the first draft of this article was posted on arXiv, extensions of Proposition~\ref{prop:2} were announced by~\citet{Russell/Sun:2022} and~\citet{Edelmann/Richards/Royen:2022}; see Lemma~2.3 and Theorem~2.1, respectively, in their manuscripts. Beyond priority claims, which are nugatory, the originality of the present work lies mainly in its method of proof and in the clarification it provides of the relationship between various assumptions made in the relevant literature, as summarized by Figure~\ref{fig:1}.

Beyond its intrinsic interest, the approach to the proof of the GPI presented herein, together with its link to the complete monotonicity of multinomial probabilities previously shown by \citet{Ouimet:2018} and \citet{Qi/etal:2020}, hints to a deep relationship between the $\mbox{MTP}_2$ class for the multivariate gamma distribution of~\citet{Krishnamoorthy/Parthasarathy:1951}, the range of admissible parameter values for their infinite divisibility, and the complete monotonicity of their Laplace transform; see the work of Royen on the GCI conjecture~\cite{Royen:2014, Royen:2015, Royen:2016, Royen:2022} and Theorems~1.2~and~1.3 of~\citet{Scott/Sokal:2014}.  These topics, and the proof or refutation of the GPI in its full generality, provide interesting avenues for future research.

\appendix

\section*{Appendix: Technical lemma\label{sec:technical.lemma}}

\renewcommand{\theequation}{A.\arabic{equation}}
\setcounter{equation}{0}

\renewcommand{\thelemma}{A.\arabic{lemma}}

The following result, used in the proof of Proposition~\ref{prop:2}, extends Lemma~1 of \citet{Alzer:2018} from the case $d = 1$ to an arbitrary integer $d \in \mathbb{N}$. It was already reported by \citet{Ouimet:2018}, see his Lemma~4.1, but its short statement and proof are included here to make the article more self-contained.

\begin{lemma}
\label{lemma:A1}
For every integer $d \in \mathbb{N}$, and real numbers $y \in (1, \infty)$ and $u_1, \ldots, u_{d+1} \in (0,1)$ such that $u_1 + \cdots + u_{d+1} = 1$, one has
\begin{equation}
\label{eq:A}
\frac{1}{y - 1} > \sum_{i=1}^{d+1} \frac{1}{y^{1/u_i} - 1}.
\end{equation}
\end{lemma}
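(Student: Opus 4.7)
The plan is to prove the lemma by induction on $d$, peeling off weights two at a time, so that the only genuine analytic content lives in the base case $d=1$ (which is Alzer's lemma).

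\textbf{Base case ($d=1$).} After the substitution $y=e^{t}$ with $t>0$, the inequality becomes
$$
\frac{1}{e^{t}-1} > \frac{1}{e^{s_{1}}-1}+\frac{1}{e^{s_{2}}-1},\qquad s_{i}:=t/u_{i},
$$
where $s_{1},s_{2}>t$ satisfy $1/s_{1}+1/s_{2}=1/t$. I would rewrite each summand as $\frac{1}{s_{i}}\cdot\frac{s_{i}}{e^{s_{i}}-1}$ and exploit the fact that the function $h(t)=t/(e^{t}-1)$ is strictly decreasing on $(0,\infty)$; this is a one-line check since the numerator of $h'(t)$ equals $e^{t}(1-t)-1$, which vanishes at $0$ and has derivative $-te^{t}<0$. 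Since $s_{i}>t$, this monotonicity gives
$$
\frac{1}{e^{s_{1}}-1}+\frac{1}{e^{s_{2}}-1} < h(t)\left(\frac{1}{s_{1}}+\frac{1}{s_{2}}\right) = h(t)\cdot\frac{1}{t} = \frac{1}{e^{t}-1},
$$
which is exactly the desired inequality.

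\textbf{Inductive step.} Assume \eqref{eq:A} holds for some $d\in\mathbb{N}$, and consider weights $u_{1},\ldots,u_{d+2}\in(0,1)$ with $u_{1}+\cdots+u_{d+2}=1$. Set $v:=u_{d+1}+u_{d+2}\in(0,1)$ and apply the base case to the number $y^{1/v}>1$ together with the weights $u_{d+1}/v$ and $u_{d+2}/v$, which sum to~$1$. This yields
$$
\frac{1}{y^{1/v}-1} > \frac{1}{y^{1/u_{d+1}}-1}+\frac{1}{y^{1/u_{d+2}}-1}.
$$
Since $u_{1}+\cdots+u_{d}+v=1$ with $d+1$ weights in $(0,1)$, the inductive hypothesis applied to $u_{1},\ldots,u_{d},v$ gives
$$
\frac{1}{y-1} > \sum_{i=1}^{d}\frac{1}{y^{1/u_{i}}-1}+\frac{1}{y^{1/v}-1}.
$$
Combining the last two displays delivers \eqref{eq:A} for $d+1$, completing the induction.

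\textbf{Anticipated obstacle.} The only place where anything nontrivial happens is the base case, so the crux is finding the right factorisation of $\frac{1}{e^{s}-1}$. The factorisation $\frac{1}{s}\cdot\frac{s}{e^{s}-1}$ works precisely because the constraint $1/s_{1}+1/s_{2}=1/t$ is linear in the reciprocals, allowing the decreasing factor $h(s_{i})<h(t)$ to be pulled out uniformly. Once that identity is recognised, the induction is completely formal, so I do not foresee any technical difficulty beyond checking the monotonicity of $h$.
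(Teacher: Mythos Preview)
Your proof is correct and follows essentially the same inductive scheme as the paper: reduce the $(d+1)$-weight inequality to the two-weight case (Alzer's lemma) by grouping weights and rescaling the base $y$, the only cosmetic difference being that you merge the last two weights and then invoke the hypothesis, whereas the paper splits off the last weight first. The one substantive addition is that you supply a self-contained proof of the base case via the monotonicity of $h(t)=t/(e^{t}-1)$ and the harmonic constraint $1/s_{1}+1/s_{2}=1/t$, while the paper simply cites Alzer; your argument there is clean and valid.
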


\begin{proof}
The proof is by induction on the integer $d$. The case $d = 1$ is the statement of Lemma~1 of \citet{Alzer:2018}. Fix an integer $d \geq 2$ and assume that inequality~\eqref{eq:A} holds for every smaller integer. Fix any reals $y \in (1, \infty)$ and $u_1, \ldots, u_d \in (0, 1)$ such that $\|\bm{u}\|_1  = u_1 + \cdots + u_d < 1$. Write $u_{d+1} = 1 - \|\bm{u}\|_1 > 0$. Calling on Alzer's result, one has
$$
\frac{1}{y - 1} > \frac{1}{y^{1/\|\bm{u}\|_1} - 1} + \frac{1}{y^{1/(1 - \|\bm{u}\|_1)} - 1} .
$$
Therefore, the conclusion follows if one can show that
$$
\frac{1}{y^{1/\|\bm{u}\|_1} - 1} > \sum_{i=1}^d \frac{1}{y^{1/u_i} - 1} .
$$
Upon setting $z = y^{1/\|\bm{u}\|_1}$ and $v_i = u_i / \|\bm{u}\|_1$, one finds that the above inequality is equivalent to
$$
\frac{1}{z - 1} > \sum_{i=1}^d \frac{1}{z^{1/v_i} - 1} ,
$$
which is true by the induction assumption. Therefore, the argument is complete.
\end{proof}

\bigskip
\noindent
\textbf{Acknowledgments.} C.~Genest's research is funded in part by the Canada Research Chairs Program, %(Grant no.~950--231937)
the Trottier Institute for Science and Public Policy, and the Natural Sciences and Engineering Research Council of Canada. %(RGPIN--2016--04720)
F.~Ouimet received postdoctoral fellowships from the Natural Sciences and Engineering Research Council of Canada and the Fond qu\'eb\'ecois de la recherche -- Nature et technologies (B3X supplement and B3XR). The authors are grateful to Donald Richards and Thomas Royen for their comments on earlier versions of this note. The authors also thank the referees for their comments.

\bigskip
\noindent
\textbf{Conflict of interest statement.} The authors state no conflict of interest.


\begin{thebibliography}{21}

\bibitem[Abramowitz and Stegun, 1964]{Abramovitz/Stegun:1964}
Abramowitz, M. and I.A. Stegun (1964). \emph{Handbook of Mathematical Functions With Formulas, Graphs, and Mathematical Tables}. US Government Printing Office, Washington, DC.

\bibitem[Alzer, 2018]{Alzer:2018}
Alzer, H. (2018). Complete monotonicity of a function related to the binomial probability. \emph{J. Math. Anal. Appl. 459}(1), 10--15.

\bibitem[B{\o}lviken, 1982]{Bolviken:1982}
B\o lviken, E. (1982). Probability inequalities for the multivariate normal with non-negative partial correlations. \emph{Scand. J. Statist. 9}(1), 49--58.

\bibitem[Edelmann et al., 2022]{Edelmann/Richards/Royen:2022}
Edelmann, D., D. Richards, and T. Royen (2022). Product inequalities for multivariate Gaussian, gamma, and positively upper orthant dependent distributions. Preprint, 1--12. arXiv:2204.06220v2.

\bibitem[Frenkel, 2008]{Frenkel:2008}
Frenkel, P.E. (2008). Pfaffians, Hafnians and products of real linear functionals. \emph{Math. Res. Lett. 15}(2), 351--358.

\bibitem[Hall, 1958]{Hall:1958}
Hall, M. Jr. (1958). A survey of combinatorial analysis. \emph{Some Aspects of Analysis and Probability}, pp. 35--104. Surveys in Applied Mathematics. Vol. 4, \emph{Wiley, New York; Chapman \& Hall, London}.

\bibitem[Kagan et al., 1973]{Kagan/etal:1973}
Kagan, A.M., Yu.V. Linnik, and C.R. Rao (1973). \emph{Characterization Problems in Mathematical Statistics}. Wiley, New York.

\bibitem[Karlin and Rinott, 1981]{Karlin/Rinott:1981}
Karlin, S. and Y. Rinott (1981). Total positivity properties of absolute value multinormal variables with applications to confidence interval estimates
and related probabilistic inequalities. \emph{Ann. Statist. 9}(5), 1035--1049.

\bibitem[Karlin and Rinott, 1983]{Karlin/Rinott:1983}
Karlin, S. and Y. Rinott (1983). {$M$}-matrices as covariance matrices of multinormal distributions. \emph{Linear Algebra Appl. 52/53}, 419--438.

\bibitem[Krishnamoorthy and Parthasarathy, 1951]{Krishnamoorthy/Parthasarathy:1951}
Krishnamoorthy, A.S. and M. Parthasarathy (1951). A multivariate gamma-type distribution. \emph{Ann. Math. Statist. 22}(4), 549--557.

\bibitem[Lan et al., 2020]{Lan/etal:2020}
Lan, G., Z.-C. Hu, and W. Sun (2020). The three-dimensional Gaussian product inequality. \emph{J. Math. Anal. Appl. 485}(2), 123858, 19 pp.

\bibitem[Lata{\l}a and Matlak, 2017]{Latala/Matlak:2017}
Lata{\l}a, R. and D. Matlak (2017). Royen's proof of the Gaussian correlation inequality. In \emph{Geometric Aspects of Functional Analysis} (Vol. 2169, pp.
265--275). Springer, Cham, Switzerland.

\bibitem[Li and Wei, 2012]{Li/Wei:2012}
Li, W.V. and A. Wei (2012). A Gaussian inequality for expected absolute products. \emph{J. Theor. Probab. 25}(1), 92--99.

\bibitem[Malicet et al., 2016]{Malicet/etal:2016}
Malicet, D., I. Nourdin, G. Peccati, and G. Poly (2016). Squared chaotic random variables: New moment inequalities with applications. \emph{J. Funct. Anal.
270}(2), 649--670.

\bibitem[Maxfield and Minc, 1962]{Maxfield/Minc:1962}
Maxfield, J.E. and H. Minc (1962). On the matrix equation {$X\sp{\prime} X = A$}. \emph{Proc. Edinburgh Math. Soc. 13}(2), 125--129.

\bibitem[Ouimet, 2018]{Ouimet:2018}
Ouimet, F. (2018). Complete monotonicity of multinomial probabilities and its application to Bernstein estimators on the simplex. \emph{J. Math. Anal.
Appl. 466}(2), 1609--1617.

\bibitem[Qi et al., 2020]{Qi/etal:2020}
Qi, F., D.-W. Niu, D. Lim, and B.-N. Guo (2020). Some logarithmically completely monotonic functions and inequalities for multinomial coefficients
and multivariate beta functions. \emph{Appl. Anal. Discrete Math. 14}(2), 512--527.

\bibitem[Royen, 2014]{Royen:2014}
Royen, T. (2014). A simple proof of the Gaussian correlation conjecture extended to some multivariate gamma distributions. \emph{Far East J. Theor. Stat.
48}(2), 139--145.

\bibitem[Royen, 2015]{Royen:2015}
Royen, T. (2015). Some probability inequalities for multivariate gamma and normal distributions. \emph{Far East J. Theor. Stat. 51}(1--2), 17--36.

\bibitem[Royen, 2016]{Royen:2016}
Royen, T. (2016). A note on the existence of the multivariate gamma distribution. 5 pp. arXiv:1606.04747.

\bibitem[Royen, 2022]{Royen:2022}
Royen, T. (2022). Some improved Gaussian correlation inequalities for symmetrical $n$-rectangles extended to some multivariate gamma distributions and some further probability inequalities. 20 pp. arXiv:2006.00769.

\bibitem[Russell and Sun, 2022]{Russell/Sun:2022}
Russell, O. and W. Sun (2022). Some new Gaussian product inequalities. Preprint. 23 pp. arXiv:2201.04242v2.

\bibitem[Scott and Sokal, 2014]{Scott/Sokal:2014}
Scott, A.D. and A.D. Sokal (2014). Complete monotonicity for inverse powers of some combinatorially defined polynomials. \emph{Acta Math. 213}(2), 323--392.

\bibitem[Stein, 2011]{Stein:2011}
Stein, N. (2011). In response to ``PSD matrix with non-negative entries''. MathOverflow Forum, June 10, 2011. See \url{https://mathoverflow.net/q/67301} [Last accessed on June 12, 2022]

\bibitem[Wei, 2014]{Wei:2014}
Wei, A. (2014). Representations of the absolute value function and applications in Gaussian estimates. \emph{J. Theoret. Probab. 27}(4), 1059--1070.

\end{thebibliography}
\end{document}